\documentclass[dvi2pdf,epstopdf,12pt,a4paper,twoside]{amsart}

\usepackage{amsfonts}
\usepackage{amsmath}
\usepackage{amssymb}
\usepackage{amsthm}
\usepackage{caligr}
\usepackage{dsfont}
\usepackage{empheq}
\usepackage{enumerate}
\usepackage{enumitem}
\usepackage[multiple,symbol,stable]{footmisc}
\usepackage[hmargin={30mm,25mm},vmargin={30mm,30mm},includefoot]{geometry}
\usepackage{graphics} 
\usepackage{graphicx}
\usepackage[colorlinks]{hyperref}
\hypersetup{linkcolor=black, citecolor=black, urlcolor=black}
\usepackage{indentfirst}
\usepackage{latexsym}
\usepackage{parskip}
\usepackage{tikz}
\usetikzlibrary{positioning,fit,calc}
\setlength{\parskip}{12pt}
\usepackage[all]{xy}

\setlength{\parindent}{24pt}
\setlength{\parskip}{5pt plus 2pt minus 1pt}

\numberwithin{equation}{section}

\makeatletter
\renewcommand{\subsection}{\@startsection
{subsection}{2}{0mm}{\baselineskip}{-0.25cm}
{\normalfont\normalsize\em}}
\makeatother


\makeatletter
\def\gaps{\mathop{\operator@font Gaps}\nolimits}
\def\:={\mathrel{\mathop:}=}
\def\=:{=\mathrel{\mathop:}}
\makeatother

\def\neg1{\text{\boldmath$1$}}

\def\tt{\caligr T}

\newtheorem{theorem}{Theorem}[section]
\newtheorem{proposition}[theorem]{Proposition}

\newtheorem{lemma}[theorem]{Lemma}

{\theoremstyle{definition}

}
\newtheorem{algor}[theorem]{Algorithm}

\theoremstyle{remark}


\makeatletter
\def\moverlay{\mathpalette\mov@rlay}
\def\mov@rlay#1#2{\leavevmode\vtop{%
   \baselineskip\z@skip \lineskiplimit-\maxdimen
   \ialign{\hfil$\m@th#1##$\hfil\cr#2\crcr}}}
\newcommand{\charfusion}[3][\mathord]{
    #1{\ifx#1\mathop\vphantom{#2}\fi
        \mathpalette\mov@rlay{#2\cr#3}
      }
    \ifx#1\mathop\expandafter\displaylimits\fi}
\makeatother

\makeatletter
\renewcommand*\env@matrix[1][*\c@MaxMatrixCols c]{%
  \hskip -\arraycolsep
  \let\@ifnextchar\new@ifnextchar
  \array{#1}}
\makeatother


    \begin{document}
    \author[F. Fornasiero]{F. Fornasiero}

\author[G. Tizziotti]{G. Tizziotti}

    \title{On Gr\"{o}bner basis for certain one-point AG codes}

\maketitle

\begin{abstract}
Heegard, Little and Saints worked out a Gr\"{o}bner basis algorithm for Hermitian codes and Farr\'{a}n, Munuera, Tizziotti and Torres extended such a result for codes on norm-trace curves. In this work we generalize such a result for codes arising from certain types of curves $\mathcal{X}$ over $\mathbb{F}_q$ with plane model $f(y)=g(x)$.
\end{abstract}

Keywords: AG codes; Gr\"{o}bner basis

MSC codes: 11T71; 13P10

\section{Introduction}
In the early 1980s, V.D. Goppa constructed error-correcting codes using algebraic curves, the called \emph{algebraic geometric codes} (AG codes), see \cite{goppa1} and \cite{goppa2}. The introduction of methods from algebraic geometry to construct good linear codes was one of the major developments in the theory of error-correcting codes. From that moment many studies and applications on this theory has emerged. In \cite{little2}, Little, Saints and Heegard introduced an encoding algorithm for a class of AG codes via Gr\"{o}bner basis, similar to the usual one for cyclic codes. This encoding method is efficient and also interesting from a theoretical point of view. It is known that the main drawback of Gr\"{o}bner basis is the high computational cost required for its calculation. Indeed, it is well known that the complexity of computing a Gr\"{o}bner basis is doubly exponential in general. But, in \cite{little}, using an appropriate automorphism of the Hermitian curve, Little et al. introduced the concept of \textit{root diagram} that allows to construct an algorithm for computing a Gr\"{o}bner basis with a lower complexity for one-point Hermitian codes. In \cite{FMTT}, the results of \cite{little} were extended to codes arising from the norm-trace curve, which is a generalization of the Hermitian curve. In both works, the one-point AG codes arising from curves over finite fields $\mathbb{F}_q$ with $q$ elements and the construction of the root diagram is made by using automorphisms whose order is equal to $q-1$. In this work, we will construct the root diagram, and consequently an algorithm for computing a Gr\"{o}bner basis, for codes arising from certain curves over $\mathbb{F}_q$ with automorphisms whose order divides $q-1$, thus we get results more general than those achieved previously. As examples, we have codes over the curves $y^q + y = x^{q^r + 1}$ and $y^q + y = x^m$, and codes over Kummer extensions, which have been applied in coding theory, see \cite{kondo} and \cite{matthews}, and \cite{quoos}, respectively.

This paper is organized as follows. In Section 2 we recall some background on Gr\"{o}bner basis for modules, AG codes and root diagram. In Section 3 we present a way to construct the root diagram for one-point AG codes $C$ arising from certain types of curves $\mathcal{X}$ over $\mathbb{F}_q$ with plane model $f(y)=g(x)$. In addition, we present the way to obtain the Gr\"{o}bner basis for such $C$. Finally, in Section 4 we present examples of those curves and the necessary informations to construct the root diagram and the Gr\"{o}bner basis studied in the previous section.

\section{Background}

\subsection{Gr\"{o}bner basis for $\mathbb{F}_{q}[t]$-modules}

We introduce some notations about Gr\"{o}bner basis for $\mathbb{F}_{q}[t]$-modules that we shall needed later.
For a complete treatment see \cite{adams} and \cite{oshea}. A \textit{monomial} $\mathbf{m}$ in the free $\mathbb{F}_{q}[t]$-module $\mathbb{F}_{q}[t]^r$ is an element of the form $\mathbf{m}=t^{i}\mathbf{e}_{j}$, where $i \geq 0$ and $\mathbf{e}_{1},\dots,\mathbf{e}_{n}$ is the standard basis of $\mathbb{F}_{q}[t]^r$. Fixed a monomial ordering, for all element $\mathbf{f} \in \mathbb{F}_{q}[t]^r$, with $\mathbf{f}\neq 0$, we may write $\mathbf{f}=a_1 \mathbf{m}_1 + \cdots + a_{\ell} \mathbf{m}_{\ell}$, where, for $1 \leq i \leq \ell$, $0 \neq a_i \in \mathbb{F}_q$ and $\mathbf{m}_i$ is a monomial in  $\mathbb{F}_{q}[t]^r$ satisfying $\mathbf{m}_1 > \mathbf{m}_2 > \ldots > \mathbf{m}_{\ell}$. The term $a_1\mathbf{m}_1$ is called \textit{leading term} of $\mathbf{f}$ and denoted by $LT(\mathbf{f})$, the coefficient $a_1$ and the monomial $\mathbf{m}_1$ are called \textit{leading coefficient}, $LC(\mathbf{f})$, and \textit{leading monomial}, $LM(\mathbf{f})$,
respectively. A \textit{Gr\"{o}bner basis} for a submodule $M \subseteq \mathbb{F}_{q}[t]^r$ is a set
$\mathcal{G} = \{ \mathbf{g}_{1}, \ldots , \mathbf{g}_{s} \}$ such that $\{ LT(\mathbf{g}_{1}), \ldots , LT(\mathbf{g}_{s}) \}$
generates the submodule $LT(M)$ formed by the leading terms of all elements in $M$.
The monomials in $LT(M)$ are called \textit{nonstandard} while those in the complement of
$LT(M)$ are the \textit{standard monomials} for $M$. We recall that every submodule $M \subseteq \mathbb{F}_{q}[t]^n$ has a Gr\"{o}bner
basis $\mathcal{G}$, which induces a a division algorithm: given $\mathbf{f} \in \mathbb{F}_{q}[t]^r$ there exist
$\mathbf{a}_{1},\ldots , \mathbf{a}_{s},\mathbf{R_{\mathcal{G}}} \in \mathbb{F}_{q}[t]^r$ such that
$\mathbf{f} = \mathbf{a}_{1} \mathbf{g}_{1} + \ldots + \mathbf{a}{s} \mathbf{g}_{s} + \mathbf{R_{\mathcal{G}}}$
(\cite{adams} Algorithm 1.5.1, or \cite{oshea} Theorem 3).

In this work we will use the POT (position over term) ordering over $\mathbb{F}_{q}[t]^r$ which is defined as follows.

Let $\{\mathbf{e}_1, \ldots , \mathbf{e}_r\}$ be the standard basis in $\mathbb{F}_{q}[t]^r$, with $\mathbf{e}_1> \ldots > \mathbf{e}_r$. The POT ordering on $\mathbb{F}_{q}[t]^r$ is defined by
$$
t^i \mathbf{e}_j > t^k \mathbf{e}_{\ell}
$$
if $j<\ell$, or $j=\ell$ and $i>k$.

We say that $\mathbf{f}\in \mathbb{F}_{q}[t]^r$ is \textit{reduced} with respect to a set $P = \{ \mathbf{p}_{1},\dots, \mathbf{p}_{l}\}$
of non-zero elements in $\mathbb{F}_{q}[t]^r$ if $\mathbf{f} = \mathbf{0}$ or no monomial in $\mathbf{f}$ is divisible by a
$LM(\mathbf{p}_{i})$, $i=1, \ldots , l$. A Gr\"{o}bner basis $\mathcal{G} = \{ \mathbf{g}_{1}, \ldots , \mathbf{g}_{s} \}$ is
\textit{reduced} if $\mathbf{g}_{i}$ is reduced with respect to $\mathcal{G} - \{\mathbf{g}_{i}\}$ and $LC(\mathbf{g}_{i})=1$ for all
$i$, and \textit{non-reduced} otherwise. Every submodule of $\mathbb{F}_{q}[t]^r$ has a unique reduced Gr\"{o}bner basis (see \cite{adams}, Theorem 3.5.22).

\medskip

\subsection{Linking AG codes and $\mathbb{F}_{q}[t]$-modules} \label{AG codes and modules} Let $\mathcal{X}$ be a projective, non-singular, geometrically irreducible, algebraic curve of genus $g>0$ defined over $\mathbb{F}_{q}$. Let $P_1,\dots,P_n,Q_1,\dots,Q_{\ell}$ be $n+ \ell$ distinct rational points on
$\mathcal{X}$ and $m_{1}, \ldots, m_{\ell}$ be integers. Consider the divisors $D=P_{1}+\cdots+P_{n}$, $G=m_1Q_1+\dots+m_{\ell}Q_{\ell}$. The {\em algebraic geometry code} (AG
code) $C_{\mathcal{X}}(D,G)$ associated to the curve $\mathcal{X},$ is defined as
\begin{equation}\label{defi AG code}
C_{\mathcal{X}}(D,G):=\{(f(P_1),\ldots,f(P_n))\in \mathbb{F}_q^n : f \in \mathcal{L}(G)\}\, ,
\end{equation}
where $\mathcal{L}(G)$ is the space of rational functions $f$ on $\mathcal{X}$ such that $f=0$ or $\mbox{div}(f)+G\ge 0$, where $\mbox{div}(f)$ denote the (principal) divisor of the function $f \in \mathcal{L}(G)$. The number $n=|Supp(D)|$ is the length of $C_{\mathcal{X}}(D,G)$, where $Supp(D)$ denotes the support of the divisor $D$, and the dimension of $C_{\mathcal{X}}(D,G)$ is its dimension as an $\mathbb{F}_q$-vector space, which is generally denoted by $k$. The elements in $C_{\mathcal{X}}(D,G)$ are called \textit{codewords}. If $G=aP$, for some rational point $P$ on $\mathcal{X}$, and $D$ is the sum of the all others rational points on $\mathcal{X}$ the AG code $C_{\mathcal{X}}(D,\lambda P)$ is called \textit{one-point AG code}. For more details about AG codes, see e.g. \cite{vanlint}.

Let $S_{n}$ be the symmetric group. $S_{n}$ acts on ${\mathbb{F}}_{q}^n$ via $\tau(a_{1}, \ldots , a_{n})= (a_{\tau(1)},
\ldots , a_{\tau(n)})$, where $\tau \in S_{n}$. The {\em automorphism group} of a code $C$ is defined as
$$\mbox{Aut}(C):=\{\sigma \in S_{n}:\sigma(C)=C \}\;.$$

In \cite{goppa2}, Goppa already observed that the underlying algebraic curve induces automorphism of the associated AG codes as follows.

\medskip

\begin{proposition} \label{prop automorphism}
Let $\mbox{Aut}({\mathcal{X}})$ be the automorphism group of $\mathcal{X}$ over $\mathbb{F}_{q}$ and consider the subgroup
   $$
Aut_{D,G}({\mathcal{X}})= \{\sigma \in\mbox{Aut}({\mathcal{X}})
\mbox{ : } \sigma(D)=D \mbox{ and } \sigma(G)=G \}\;.
   $$
Each $\sigma \in \mbox{\rm Aut}_{D,G}({\mathcal{X}})$ induces an automorphism
of $C_{\mathcal{X}}(D,G)$ by $$\widehat{\sigma} (f(P_{1}), \ldots , f(P_{n}))= (f(\sigma(P_{1})), \ldots,f(\sigma(P_{n})))\;.$$
\end{proposition}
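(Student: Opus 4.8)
The plan is to verify directly that the map $\widehat\sigma$ is well defined on codewords and is an element of $\mathrm{Aut}(C_{\mathcal X}(D,G))$, and for this the condition $\sigma(G)=G$ is exactly what is needed. First I would recall that for a rational function $f$ on $\mathcal X$ and an automorphism $\sigma\in\mathrm{Aut}(\mathcal X)$, pulling back along $\sigma$ gives a rational function $f\circ\sigma$ with $\mathrm{div}(f\circ\sigma)=\sigma^{-1}(\mathrm{div}(f))$, where $\sigma$ acts on divisors by permuting points. Hence if $f\in\mathcal L(G)$, i.e. $\mathrm{div}(f)+G\ge 0$, then $\mathrm{div}(f\circ\sigma)+\sigma^{-1}(G)=\sigma^{-1}(\mathrm{div}(f)+G)\ge 0$; since $\sigma\in\mathrm{Aut}_{D,G}(\mathcal X)$ also fixes $G$ (and so does $\sigma^{-1}$), this says $f\circ\sigma\in\mathcal L(G)$. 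Thus $\sigma$ induces an $\mathbb F_q$-linear automorphism of the finite-dimensional space $\mathcal L(G)$.

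Next I would connect this to the evaluation map $\mathrm{ev}\colon \mathcal L(G)\to\mathbb F_q^n$, $f\mapsto(f(P_1),\dots,f(P_n))$, whose image is $C_{\mathcal X}(D,G)$ by definition \eqref{defi AG code}. The hypothesis $\sigma(D)=D$ means that $\sigma$ permutes the set $\{P_1,\dots,P_n\}$, so there is a permutation $\tau\in S_n$ with $\sigma(P_i)=P_{\tau(i)}$. Then for $f\in\mathcal L(G)$ one computes $\mathrm{ev}(f\circ\sigma)=\bigl((f\circ\sigma)(P_1),\dots,(f\circ\sigma)(P_n)\bigr)=\bigl(f(P_{\tau(1)}),\dots,f(P_{\tau(n)})\bigr)=\tau\bigl(\mathrm{ev}(f)\bigr)$ in the notation for the $S_n$-action on $\mathbb F_q^n$ introduced above. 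So on codewords the prescribed formula $\widehat\sigma(c)=(c_{\tau(1)},\dots,c_{\tau(n)})$ agrees with $\mathrm{ev}\circ(f\mapsto f\circ\sigma)\circ\mathrm{ev}^{-1}$; one should note that although $\mathrm{ev}$ need not be injective in general, the identity $\mathrm{ev}(f\circ\sigma)=\tau(\mathrm{ev}(f))$ shows the value of $\widehat\sigma$ depends only on the codeword $\mathrm{ev}(f)$, so $\widehat\sigma$ is a well-defined map $C_{\mathcal X}(D,G)\to\mathbb F_q^n$.

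It then remains to check that $\widehat\sigma$ maps $C_{\mathcal X}(D,G)$ onto itself and lies in $S_n$. Since $f\circ\sigma$ ranges over all of $\mathcal L(G)$ as $f$ does (the previous paragraph shows $f\mapsto f\circ\sigma$ is a bijection of $\mathcal L(G)$), the image of $C_{\mathcal X}(D,G)$ under $\widehat\sigma$ is again $\mathrm{ev}(\mathcal L(G))=C_{\mathcal X}(D,G)$; and $\widehat\sigma$ is the restriction to $C_{\mathcal X}(D,G)$ of the coordinate permutation $\tau\in S_n$, hence $\widehat\sigma\in\mathrm{Aut}(C_{\mathcal X}(D,G))$ as defined. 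I expect the only genuinely delicate point to be the bookkeeping around $\sigma$ versus $\sigma^{-1}$ in the action on divisors and coordinates — the statement as written uses $\sigma(P_i)$ in the argument of $f$, which forces the pull-back convention — together with handling the edge case $f=0$ (trivially $0\circ\sigma=0\in\mathcal L(G)$) so that $\mathcal L(G)$ is treated as a whole. Everything else is a routine unwinding of definitions, and no hypothesis beyond $\sigma(D)=D$ and $\sigma(G)=G$ is used.
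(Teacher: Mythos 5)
Your proof is correct and is essentially the standard argument; the paper itself states this proposition without proof, attributing the observation to Goppa \cite{goppa2}, so there is no internal proof to compare against. You correctly derive $\mathrm{div}(f\circ\sigma)=\sigma^{-1}(\mathrm{div}(f))$, use $\sigma(G)=G$ (hence $\sigma^{-1}(G)=G$) to show $\mathcal L(G)$ is stable under $f\mapsto f\circ\sigma$, use $\sigma(D)=D$ to get a permutation $\tau\in S_n$ with $\sigma(P_i)=P_{\tau(i)}$, and identify $\widehat\sigma$ with the restriction of $\tau$ to the code, which then maps $C_{\mathcal X}(D,G)$ onto itself. One small tightening: the well-definedness remark is harmless but redundant, since $\widehat\sigma$ is already a well-defined coordinate permutation on all of $\mathbb F_q^n$; the genuine content is that $\tau$ preserves the code, and your observation that $f\mapsto f\circ\sigma$ is a bijection of $\mathcal L(G)$ (with inverse $f\mapsto f\circ\sigma^{-1}$) supplies exactly that.
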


\medskip

Assume that $\mathcal{X}$ has a nontrivial automorphism $\sigma \in Aut_{D,G} (\mathcal{X})$ and let $H$ be the cyclic subgroup of $Aut (\mathcal{X})$ generated by $\sigma$. Let $Supp(D)=O_1 \cup \ldots \cup O_r$ be the decomposition of the support of $D$ into disjoint orbits under the action of $\sigma$. Then, by Proposition \ref{prop automorphism}, the entries of codewords in $C_{\mathcal{X}}(D,G)$ corresponding to the points in each $O_i$ are permuted cyclically by $\sigma$. We will denote $\sigma^0 = Id$, where $Id$ is the identity automorphism, and, for a positive integer $j$, $\sigma^j = \underbrace{\sigma \circ \sigma \circ \ldots \circ \sigma}_{j}$. In this way, for each $i=1,\ldots, r$, by choosing any one point $P_{i,0} \in O_i$, we can enumerate the other points on $O_i$ as $P_{i,j} = \sigma^{j}(P_{i,0})$, where $j$ runs from $0$ to $|O_i|-1$.  Using this fact, we get the following result.

\begin{lemma} \label{lema submodulo}
Let $C_{\mathcal{X}}(D,G)$ be an AG code arising from $\mathcal{X}$ over $\mathbb{F}_{q}$. Suppose that $\mathcal{X}$ has a nontrivial automorphism $\sigma \in Aut_{D,G} (\mathcal{X})$. If $Supp(D)=O_1 \cup \ldots \cup O_r$ is the decomposition of the support of $D$ into disjoint orbits under the action of $\sigma$, then there is an one-to-one correspondence between $C_{\mathcal{X}}(D,G)$ and a submodule $\overline{C}$ of the free module $\mathbb{F}_{q}[t]^r$.
\end{lemma}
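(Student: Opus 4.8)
The plan is to realize the correspondence explicitly by grouping the coordinates of each codeword according to the $\sigma$-orbits and encoding each orbit as a polynomial in $t$ modulo $t^{|O_i|}-1$. First I would fix, for each orbit $O_i$, a base point $P_{i,0}$ and the enumeration $P_{i,j}=\sigma^j(P_{i,0})$, $0\le j\le |O_i|-1$, as set up before the statement. Given a codeword $c=(f(P_1),\dots,f(P_n))\in C_{\mathcal{X}}(D,G)$ with $f\in\mathcal{L}(G)$, I would define, for each $i$, the polynomial
\[
c_i(t)\;=\;\sum_{j=0}^{|O_i|-1} f\bigl(P_{i,j}\bigr)\,t^{j}\;\in\;\mathbb{F}_q[t],
\]
and assemble the vector $\varphi(c)=\bigl(c_1(t),\dots,c_r(t)\bigr)\in\mathbb{F}_q[t]^r$. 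Since $c$ determines all the values $f(P_{i,j})$ and conversely, the map $\varphi$ is a well-defined $\mathbb{F}_q$-linear injection from $C_{\mathcal{X}}(D,G)$ into $\mathbb{F}_q[t]^r$; its image $\overline{C}\:=\varphi\bigl(C_{\mathcal{X}}(D,G)\bigr)$ is an $\mathbb{F}_q$-subspace, and $\varphi$ is an $\mathbb{F}_q$-linear bijection onto $\overline{C}$. This already gives the one-to-one correspondence as sets; the remaining work is to check that $\overline{C}$ is closed under multiplication by $t$, i.e.\ is an $\mathbb{F}_q[t]$-submodule.

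The key step is the $t$-action. Multiplying $c_i(t)$ by $t$ and reducing modulo $t^{|O_i|}-1$ cyclically shifts the coefficients, so $t\cdot c_i(t)$ (mod $t^{|O_i|}-1$) has $j$-th coefficient $f(P_{i,j-1})=f(\sigma^{-1}(P_{i,j}))=(f\circ\sigma^{-1})(P_{i,j})$. By Proposition~\ref{prop automorphism}, since $\sigma\in Aut_{D,G}(\mathcal{X})$ we have $\sigma(G)=G$, hence $f\circ\sigma^{-1}\in\mathcal{L}(G)$ as well, and $\sigma(D)=D$ means $\sigma$ permutes $\{P_1,\dots,P_n\}$ within the orbits; therefore the shifted vector is again $\varphi$ of a codeword, namely $\widehat{\sigma^{-1}}(c)\in C_{\mathcal{X}}(D,G)$. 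To make this literally a module action I would work in $\mathbb{F}_q[t]^r$ with the understanding that the $i$-th component lives modulo $t^{|O_i|}-1$: concretely, I would note that $\overline{C}$ contains, for each $i$, the element $(t^{|O_i|}-1)\mathbf{e}_i$ is forced to be handled — so the cleanest formulation is to say $\overline{C}$ is the preimage submodule containing $\langle (t^{|O_1|}-1)\mathbf{e}_1,\dots,(t^{|O_r|}-1)\mathbf{e}_r\rangle$, or equivalently to define $\overline{C}$ inside $\bigoplus_{i=1}^r \mathbb{F}_q[t]/(t^{|O_i|}-1)$ and then lift. Either way, closure under $t$ follows from the computation above, and closure under $\mathbb{F}_q$-scalars is immediate, so $\overline{C}$ is an $\mathbb{F}_q[t]$-submodule.

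The main obstacle — really a bookkeeping subtlety rather than a deep difficulty — is the mismatch between the honest cyclic objects $\mathbb{F}_q[t]/(t^{|O_i|}-1)$ and the stated free module $\mathbb{F}_q[t]^r$: one must be careful to include the relations $t^{|O_i|}-1$ in the submodule (so that $t$ genuinely acts as the cyclic shift) and to verify that the resulting $\overline{C}\subseteq\mathbb{F}_q[t]^r$ is still in bijection with $C_{\mathcal{X}}(D,G)$, i.e.\ that passing to the free module does not collapse or enlarge the code. I would close by remarking that when all orbits have the same length $N=|O_i|$ — which is the relevant case for the curves treated in Section~3, where $\sigma$ has order dividing $q-1$ and acts with uniform orbit size on $Supp(D)$ — the description simplifies to $\overline{C}\subseteq\bigl(\mathbb{F}_q[t]/(t^N-1)\bigr)^r$, and this is the form in which the root diagram of the next section will be built.
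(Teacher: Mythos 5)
Your proposal is correct and follows essentially the same route as the paper: both map each codeword to the $r$-tuple of orbit-indexed polynomials $h_i(t)=\sum_j f(P_{i,j})t^j$, observe that the resulting collection lives naturally in $\bigoplus_i \mathbb{F}_q[t]/\langle t^{|O_i|}-1\rangle$, and then take the preimage $\overline{C}=\pi^{-1}(\tilde C)$ under the projection $\pi:\mathbb{F}_q[t]^r\to\bigoplus_i \mathbb{F}_q[t]/\langle t^{|O_i|}-1\rangle$ to obtain a genuine submodule of the free module. You actually spell out two points the paper leaves implicit --- namely that closure under $t$ comes from the cyclic shift $t\cdot h_i(t)\equiv\varphi(\widehat{\sigma^{-1}}(c))_i \pmod{t^{|O_i|}-1}$ using $\sigma\in\mathrm{Aut}_{D,G}(\mathcal{X})$, and that the stated ``one-to-one correspondence'' is really with the quotient $\tilde C$ rather than with $\overline{C}$ itself (since $\ker\pi\subset\overline{C}$) --- so your account is, if anything, slightly more careful than the paper's.
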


\begin{proof}
Suppose that $Supp(D)=O_1 \cup \ldots \cup O_r$ is the decomposition of the support of $D$ into disjoint orbits under the action of $\sigma$. For each $i=1,\ldots,r$, let $O_i = \{P_{i,0}, \ldots, P_{i,|O_i|-1}\}$, where for each $P_{i,j} \in O_i$ we have that $P_{i,j}=\sigma^j(P_{i,0})$ be as above, and let $h_i(t) = \sum_{j=0}^{|O_i|-1} f(P_{i,j})t^j$. 

The $r$-tuples $(h_1(t), \ldots, h_r(t))$ can be seen also as an element of the $\mathbb{F}_{q}[t]$-module $A = \bigoplus_{i=1}^{r}\mathbb{F}_{q}[t]/ \langle t^{|O_{i}|} -1 \rangle$. So, the collection $\tilde{C}$ of $r$-tuples obtained from all $f\in \mathcal{L}(G)$ is closed under sum and multiplication by $t$. Define $\overline{C} := \pi^{-1}(\tilde{C})$, where $\pi$ is the natural projection from $\mathbb{F}_{q}[t]^r$ onto $\bigoplus_{i=1}^{r} \mathbb{F}_{q}[t]/ \langle t^{|O_{i}|} -1 \rangle$. Thus, we get an one-to-one correspondence between $C_{\mathcal{X}}(D,G)$ and $\overline{C} \leq \mathbb{F}_{q}[t]^r$. $\square$
\end{proof}

By the previous lemma, an AG code $C_{\mathcal{X}}(D,G)$ can be identified to a submodule $\overline{C}\leq \mathbb{F}_{q}[t]^r$ and the standard theory of Gr\"{o}bner basis for modules may be applied.

Suppose that $C_{\mathcal{X}}(D,G)$ has length $n$ and dimension $k$. A Gr\"{o}bner basis $\mathcal{G} = \{ \mathbf{g}^{(1)}, \ldots , \mathbf{g}^{(r)} \}$ for $\overline{C}\leq \mathbb{F}_{q}[t]^r$ with exactly $r$ elements allows us to obtain a systematic encoding of $C$. 
Since $\{ LT(\mathbf{g}^{(1)}), \ldots,LT(\mathbf{g}^{(r)}) \}$ generates $LT(\overline{C})$,
then the nonstandard monomials appearing in the $r$-uples $(h_{1}(t), \ldots , h_{r}(t))$ can be obtained from the $\mathbf{g}^{(i)}$'s.
By ordering these monomials in decreasing order we obtain the so-called
\textit{information positions} of $(h_{1}(t), \ldots , h_{r}(t))$, which are the first $k$ monomials
$\mathbf{m}_{l} = t^{i_{l}}\mathbf{e}_{j_{l}}$, $l=1, \ldots , k$.
Let $VC(h_{1}(t), \ldots , h_{r}(t))$ be the vector of coefficients of the terms of
$(h_{1}(t), \ldots , h_{r}(t))$ listed in the POT order. We have the following systematic encoding algorithm:

\begin{algor}\label{encodingalgor}
$\,$
$\mbox{}$\newline
{\bf Input:} A Gr\"{o}bner basis $\mathcal{G}$, monomials
$\{ \mathbf{m}_{1}, \ldots , \mathbf{m}_{k} \}$ and $\mathbf{w}=(w_{1}, \ldots , w_{k}) \in \mathbb{F}_{q}^k$. \newline
{\bf Output:} $c(\mathbf{w}) \in C=C(\mathcal{X},D,G)$.
$\mbox{}$\newline
$\mbox{ }$ 1. Set $f:= w_{1} \mathbf{m}_{1}+\dots+w_{k} \mathbf{m}_{k}$. \newline
$\mbox{ }$ 2. Compute $f= \mathbf{a}_{1} \mathbf{g}^{(1)} + \ldots + \mathbf{a}_{r} \mathbf{g}^{(r)} + \mathbf{R_{\mathcal{G}}}$. \newline
$\mbox{ }$ 3. Return $c(\mathbf{w}):=VC(f-\mathbf{R_{\mathcal{G}}})$.

\end{algor}

This method is more compact compared with the usual encoding via generator matrix. The total amount of computation is roughly the same
and the amount of necessary stored data is lower in this method, of order $r(n-k)$ against $k(n-k)$ when encoding via generator matrix. More details about this encoding algorithm can be found in
\cite{little2}.

\subsection{The root diagram}
Let $\mathcal{X}$ be as in the previous subsection. Suppose that the one-point AG code $C=C_{\mathcal{X}}(D,\lambda P)$ has an automorphism $\sigma$ that fixing the divisors $D$ and $G=\lambda P$. Suppose also that the order of $\sigma$ is equal to $s$, with $s = d( q-1)$ for some $d \in \mathbb{N}$. Let $\overline{C}$ be the submodule of $\mathbb{F}_{q}[t]^r$ associated to $C$ by the automorphism $\sigma$. Using the POT ordering we can get that a Gr\"{o}bner basis $\mathcal{G}=\{\mathbf{g}_1, \ldots , \mathbf{g}_r\}$ for $\overline{C}$ such that $\mathbf{g}_i = (0,\ldots , 0 , g_i^{(i)}(t), g_i^{(i+1)}(t), \ldots , g_i^{(r)}(t))$, for all $i=1,\ldots,r$, see [\cite{little2}, Proposition II.B.4].

Note that, if $deg(g_i^{(i)}(t))=d_i$, then $g_i^{(i)}(t)$ has $d_i$ distinct roots in $\mathbb{F}_q^{\ast}=\mathbb{F}_q \setminus \{0\}$. In fact, let ${\mathbf{q}}_{i}=(t^{|O_{i}|}-1){\mathbf{e}}_{i}$. Note that ${\mathbf{q}}_{i}\in\pi^{-1}(0,\ldots ,0)$ and we have that ${\mathbf{q}}_{i}\in\overline{C}$. Since $|O_i|$ divides $s$ and $s$ divides $q-1$, follows that $t^{|O_{i}|}-1$ divides $t^{q-1}-1 = \prod_{a \in \mathbb{F}_q^{\ast}}(t-a)$. Now, $LT({\mathbf{g}}^{(i)})=g_{i}^{(i)}(t)$ divides $LT({\mathbf{q}}_{i})=t^{|O_{i}|}-1$, and the claim follows from the fact $t^{q-1}-1$ has $q-1$ distinct roots in $\mathbb{F}_q$.

For $i=1, \ldots , r$, let ${\mathcal{R}}_{i}\subseteq {\mathbb{F}}_{q}^*$ be the set of roots of $t^{|O_{i}|}-1$.
By a \textit{root diagram} ${\mathcal{D}_{C}}$ for the code $C$, we mean a table with $r$ rows. For each $i$, the boxes on the $i$-th row correspond to the elements of ${\mathcal{R}}_{i}$.
We mark the roots of $g_{i}^{(i)}(t)$ on the $i$-th row with a $X$ in the corresponding box.

By Proposition II.C.1 in \cite{little2}, there is a $\mathbb{F}_q$-basis for $C$ in one-to-one correspondence with the nonstandard monomials in $\overline{C}$. That is, terms of the form $t^{\ell}\mathbf{e}_j$ appearing as leading terms of some element of $\overline{C}$, with $\ell \leq |O_j|-1$. Now, if there are $m_j$ empty boxes on row $j$ of the root diagram, then $g_{j}^{(i)}(t)$ has $|O_j| - m_j$ roots and $LT({\mathbf{g}}^{(j)})=t^{|O_j|-m_j}$. So, we obtain $m_j$ nonstandard monomials $t^{\ell}\mathbf{e}_j$. This fact gives us the following important result.

\begin{proposition}{\rm (\cite{little}, Proposition 2.3)} \label{propdimensao}
The dimension of the code $C$ is equal to the number of empty boxes in the root diagram $\mathcal{D}_{C}$.
\end{proposition}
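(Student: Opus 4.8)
The plan is to count the dimension $k$ of $C$ by exhibiting an explicit $\mathbb{F}_q$-basis indexed by the empty boxes of $\mathcal{D}_C$, and then invoke the structure of the Gröbner basis $\mathcal{G}=\{\mathbf{g}^{(1)},\ldots,\mathbf{g}^{(r)}\}$ in echelon form together with the identification $C \cong \overline{C}/\ker\pi$ coming from Lemma \ref{lema submodulo}. Recall from the preceding discussion that, by Proposition~II.C.1 of \cite{little2}, there is an $\mathbb{F}_q$-basis of $C$ in bijection with the \emph{nonstandard monomials} $t^\ell\mathbf{e}_j$ of $\overline{C}$ subject to the range restriction $0 \le \ell \le |O_j|-1$ (this restriction is exactly what makes the count finite, since $\overline{C}$ contains $\mathbf{q}_i = (t^{|O_i|}-1)\mathbf{e}_i$ and hence every $t^\ell\mathbf{e}_j$ with $\ell$ large is nonstandard "for free" but contributes nothing new to $C$). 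So the whole proof reduces to the combinatorial identity: the number of such range-restricted nonstandard monomials on row $j$ equals the number of empty boxes on row $j$ of $\mathcal{D}_C$.

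First I would fix a row index $j$ and analyze the $j$-th component polynomial $g_j^{(j)}(t)$ of the echelon generator $\mathbf{g}^{(j)}$. By the echelon shape $\mathbf{g}^{(j)}=(0,\ldots,0,g_j^{(j)}(t),\ldots,g_j^{(r)}(t))$ and the POT ordering, $LT(\mathbf{g}^{(j)}) = g_j^{(j)}(t)\,\mathbf{e}_j$ — more precisely $LT(\mathbf{g}^{(j)}) = t^{\deg g_j^{(j)}}\mathbf{e}_j$ — and this single leading term generates the row-$j$ part of $LT(\overline{C})$ in the free-module sense. Consequently a monomial $t^\ell\mathbf{e}_j$ is \emph{standard} for $\overline{C}$ iff $t^{\deg g_j^{(j)}} \nmid t^\ell$, i.e. iff $\ell < \deg g_j^{(j)}$, while it is \emph{nonstandard} iff $\ell \ge \deg g_j^{(j)}$. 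Combined with the range restriction $\ell \le |O_j|-1$, the number of range-restricted nonstandard monomials on row $j$ is exactly $|O_j| - \deg g_j^{(j)}$, provided $\deg g_j^{(j)} \le |O_j|$ — which holds because, as shown in the excerpt, $g_j^{(j)}(t)$ divides $t^{|O_j|}-1$.

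Next I would translate this into the root-diagram language. Row $j$ of $\mathcal{D}_C$ has exactly $|O_j|$ boxes, one for each element of $\mathcal{R}_j$, the root set of $t^{|O_j|}-1$; since $|O_j|$ divides $s \mid q-1$, we have $|\mathcal{R}_j| = |O_j|$ and all roots are simple and lie in $\mathbb{F}_q^\ast$. A box is marked $X$ precisely when the corresponding element is a root of $g_j^{(j)}(t)$; since $g_j^{(j)} \mid t^{|O_j|}-1$ and the latter is squarefree, $g_j^{(j)}$ is itself squarefree with all roots in $\mathcal{R}_j$, so the number of $X$'s on row $j$ is exactly $\deg g_j^{(j)}$ and the number of empty boxes is $|O_j| - \deg g_j^{(j)}$. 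Comparing with the previous paragraph, the empty-box count on row $j$ equals the range-restricted nonstandard-monomial count on row $j$. Summing over $j=1,\ldots,r$ and using the bijection from Proposition~II.C.1 of \cite{little2} gives $k = \sum_{j=1}^r \big(|O_j| - \deg g_j^{(j)}\big) = \#\{\text{empty boxes in } \mathcal{D}_C\}$, which is the claim.

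The step I expect to require the most care is the clean justification that the standard-vs-nonstandard dichotomy for $\overline{C}$ on row $j$ is governed \emph{solely} by divisibility by $LT(\mathbf{g}^{(j)})$, and in particular that no leading term $LT(\mathbf{g}^{(i)})$ with $i<j$ can "spill" into row $j$ and kill a monomial $t^\ell\mathbf{e}_j$ — this is precisely where the POT ordering and the echelon form of $\mathcal{G}$ are essential, since under POT a monomial with component index $i<j$ is automatically larger than any monomial with component index $j$, so $LT(\mathbf{g}^{(i)})$ sits in component $i$, not $j$. One must also be slightly careful that Proposition~II.C.1 of \cite{little2} is being applied with the correct notion of "nonstandard monomial of $\overline{C}$ that survives to $C$", i.e. that the kernel of $\pi$ is generated by the $\mathbf{q}_i$ and these contribute exactly the out-of-range monomials; but this is standard and already implicit in the construction of $\overline{C}$ in Lemma \ref{lema submodulo}. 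Everything else is bookkeeping about squarefree divisors of $t^{|O_j|}-1$.
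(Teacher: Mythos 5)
Your proof is correct and follows essentially the same route the paper sketches in the paragraph preceding the proposition: invoke Proposition~II.C.1 of \cite{little2} to count dimension via range-restricted nonstandard monomials, then use that $g_j^{(j)}$ is a squarefree divisor of $t^{|O_j|}-1$ so that its degree equals the number of marked boxes on row $j$, giving $|O_j|-\deg g_j^{(j)}$ empty boxes and the same number of nonstandard monomials $t^\ell\mathbf{e}_j$ in range. Your explicit remark that POT ordering plus the echelon form of $\mathcal{G}$ prevents $LT(\mathbf{g}^{(i)})$ with $i<j$ from dividing any $t^\ell\mathbf{e}_j$ is a useful clarification but not a departure from the paper's argument.
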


\section{Gr\"{o}bner basis for certain AG codes}

Finding a Gr\"obner basis is hard in general. Next, we will see that for certain codes AG this task is simplified by using the concept of root diagram.

Let $\mathcal{X}$ be as in the previous section and let $\mathbb{F}_{q}(\mathcal{X})$ be the field of rational functions on $\mathcal{X}$. For a rational point $P$ on $\mathcal{X}$ let
$$
H(P):= \{n \in \mathbb{N}_{0} \mbox{ ; } \exists f \in \mathbb{F}_{q}(\mathcal{X}) \mbox{ with } div_{\infty}(f) = n P \},
$$
where $\mathbb{N}_{0}$ is the set of nonnegative integers and $div_{\infty}(f)$ denotes the divisor of poles of $f$. The set $H(P)$ is a numerical semigroup, called \textit{Weierstrass semigroup} of $\mathcal{X}$ at $P$ and its complement $G(P) = \mathbb{N}_{0} \setminus H(P)$ is called \textit{Weierstrass gap set} of $P$. As an important result, the cardinality of $G(P)$ is equal to genus $g$ of $\mathcal{X}$, see Theorem 1.6.8 in \cite{stichtenoth2}.

Let $\mathcal{X}_{a,b}$ be the curve defined over $\mathbb{F}_q$ by affine equation $f(y)=g(x)$, where $f(T),g(T) \in \mathbb{F}_q[T]$, $deg(f)=a$ and $deg(g)=b$, with $a<b$ and $gcd(a,b)=1$. Furthermore, suppose that  $div_{\infty}(x)=aP$ and $div_{\infty}(y)=bP$, for some point on $\mathcal{X}_{a,b}$, and that there exists $\sigma \in Aut_{D,G}(\mathcal{X}_{a,b})$, where $G=\lambda P$ for some positive integer $\lambda$, given by $\sigma(x)=\alpha x$ and $\sigma(y)=\alpha^t y$, for some positive integer $t$ and some $\alpha \in \mathbb{F}_{q}^{\ast}$ with order equal to $ord(\alpha):=\nu$. Finally, assume that $H(P)=\langle a,b \rangle$.

Consider the one-point AG code $C_{\mathcal{X}_{a,b}}(D,\lambda P)$. Let $D=P_1+ \ldots + P_n$ and $Supp(D)=O_1 \cup \ldots \cup O_r \cup O_{r+1} \cup O_{r+s}$ be the decomposition of the support of $D$ into disjoint orbits under the action of $\sigma$. Note that, by definition of $\sigma$, if $Q=(0,\eta)\in O_i$, for some $\eta \in \mathbb{F}_q$, then $O_i = \{ (0,\eta), (0, \alpha^t \eta), \ldots , (0,\alpha^{t.t_i}) \}$, where $t_i$ is the smallest nonnegative integer such that $\alpha^{t.(t_i+1)}=1$. Analogously, if $Q=(\omega,0)\in O_i$, for some $\omega \in \mathbb{F}_q$, then $O_i = \{ (\omega,0), (\alpha \omega , 0), \ldots , (\alpha^{\nu-1} \omega,0) \}$. Let $O_{r+1}, \ldots , O_{r+s}$ be the orbits that contains rational points of the form $(0,\eta)$ or $(\omega,0)$. We will work with the first $r$ rows of the root diagram $\mathcal{D}_{C}$ for the code $C_{\mathcal{X}_{a,b}}(D,\lambda P)$, the results for the last $s$ rows are similar can be obtained in particular cases. For each $i=1,\ldots,r$, suppose that $O_i = \{ P_{i,0}, P_{i,1}, \ldots, P_{i,|O_i|-1}  \}$, where $P_{i,0}=(x_i,y_i)$, with $x_i \neq 0$ and $y_i\neq0$, and $P_{i,j}=\sigma^j(P_{i,0})=(\alpha^j x_i , \alpha^{jt}y_i)$. So, by the definition of $\sigma$ follows that $|O_1|= \ldots = |O_r|=ord(\alpha)=\nu$. Assume that, for each $i=1,\ldots, r$, there exists polynomials $M_i(y)$ such that the orbit $O_{i}$ is the intersection of $\mathcal{X}$ with the curve $M_{i}(y)=0$ and, for all $i$, $M_{i}(y)$ is a non-zero constant when restricted to each of the orbits $O_{k}$, $k\neq i$. For $1 \leq i \leq r$ and $0 \leq j \leq |O_i|-1=\nu -1$, assume also that there are polynomials $B_{i,j}(x,y)$ such that $B_{i,j}(x,y)$ vanishes at each point of $O_{i}$ except $P_{i,j}$.

\begin{lemma} \label{lemma 1}
For  $i=1,\ldots,r$ and $j=0, \ldots, |O_i|-1$, let $M_{i}(y)$ and $B_{i,j}(x,y)$ be as above. Then, $div_{\infty}(M_i)=(\rho_1 b) P$ and $div_{\infty}(B_{i,j})=(\rho_2 a+\rho_3 b)P$, where $\rho_1, \rho_2$ and $\rho_3$ are non-negative integers.
\end{lemma}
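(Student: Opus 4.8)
The plan is to exploit the hypothesis $H(P)=\langle a,b\rangle$, which forces every rational function on $\mathcal{X}_{a,b}$ regular away from $P$ to be a polynomial in $x$ and $y$, together with the pole-order bookkeeping $\mathrm{div}_\infty(x)=aP$ and $\mathrm{div}_\infty(y)=bP$. First I would observe that $M_i(y)$, being by construction a polynomial in $y$ alone, has its only pole at $P$; writing $M_i(y)=\sum_{k=0}^{\deg M_i} c_k y^k$ we get $\mathrm{div}_\infty(M_i)=(\deg(M_i)\cdot b)P$, so $\rho_1=\deg(M_i)$ is a non-negative integer and the first claim is immediate. The content worth stating is that this $\rho_1$ is exactly the number of distinct $y$-values realized by points of $O_i$ (equivalently, the degree of the polynomial cutting out the $y$-coordinates of $O_i$ on the curve), but for the lemma as stated only non-negativity and integrality are needed.

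Next I would handle $B_{i,j}(x,y)$. A priori $B_{i,j}$ is a polynomial in both $x$ and $y$, so it is regular outside $P$ and hence lies in $\mathcal{L}(NP)$ for some $N$; since $H(P)=\langle a,b\rangle$, the pole order $N=-v_P(B_{i,j})$ belongs to $\langle a,b\rangle$, i.e. $N=\rho_2 a+\rho_3 b$ for some non-negative integers $\rho_2,\rho_3$. That is all the statement asks. Concretely, if one writes $B_{i,j}=\sum c_{k,l}x^k y^l$ and uses that in the local ring at $P$ the valuations are $v_P(x)=-a$, $v_P(y)=-b$, then $v_P(B_{i,j})=-\max_{(k,l):c_{k,l}\neq 0}(ka+lb)$ because the relation $gcd(a,b)=1$ guarantees that the various monomials have pairwise distinct pole orders at $P$ (no cancellation of the leading term), so the maximizing pair $(k,l)$ gives $\rho_2=k$, $\rho_3=l$ directly.

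The main obstacle — and the only place care is needed — is justifying that there is no cancellation at $P$ among the monomials of $B_{i,j}$, so that the pole order is genuinely the maximum of $ka+lb$ over the support and thus lies in the semigroup with honest non-negative exponents. This uses $gcd(a,b)=1$: the map $(k,l)\mapsto ka+lb$ restricted to the exponent set appearing in $B_{i,j}$ is injective once we reduce modulo the curve relation $f(y)=g(x)$ to a normal form with $l<a$ (since $\deg f=a$), which is exactly the standard monomial basis $\{x^k y^l : l<a\}$ of $\mathbb{F}_q[x,y]$ over the function field of $\mathcal{X}_{a,b}$; on this basis the pole orders $ka+lb$ are all distinct modulo the fact that two such values agree only if $a\mid(l-l')$, impossible for $0\le l,l'<a$ unless $l=l'$. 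Hence the leading monomial survives, its pole order is $\rho_2 a+\rho_3 b$ with $\rho_2,\rho_3\in\mathbb{N}_0$, and $\mathrm{div}_\infty(B_{i,j})=(\rho_2 a+\rho_3 b)P$ as claimed. For $M_i(y)$ the same normal-form remark is trivial since it already only involves $y$ and, being non-constant on $O_i$, is non-constant as a function, so $\rho_1\ge 1$.
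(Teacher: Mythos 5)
Your proof is correct and follows essentially the same route as the paper, whose own proof is a one-liner: the result follows because $M_i$ and $B_{i,j}$ are polynomials in $x,y$ with $\mathrm{div}_\infty(x)=aP$ and $\mathrm{div}_\infty(y)=bP$. One remark: the paragraph you flag as the ``main obstacle'' (no cancellation among leading monomials, via normal form and $\gcd(a,b)=1$) is not actually needed for the lemma as stated, since once you observe that $B_{i,j}$ is regular away from $P$ its pole order automatically lies in $H(P)=\langle a,b\rangle$, which by definition is the set of non-negative integer combinations $\rho_2 a+\rho_3 b$ — and that is all the lemma asserts; the finer cancellation analysis would only be relevant if one wanted to compute $\rho_2,\rho_3$ explicitly from the bidegree of $B_{i,j}$.
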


\begin{proof}
We have that $div_{\infty}(x)=aP$ and $div_{\infty}(y)=bP$. Then, the result follows from the fact that $M_i(y)$ and $B_{i,j}(x,y)$ are polynomials. $\square$
\end{proof}

Let $\rho_1, \rho_2$ and $\rho_3$ be as the previous lemma. So, for $\displaystyle \lambda \leq (\rho_2 a+\rho_3 b) + r(\rho_1 b)$, we can get the following information about the root diagram $\mathcal{D}_{C}$.

\begin{proposition}\label{123eta2}
Let $C_{\mathcal{X}_{a,b}}(D,\lambda P)$ and $\sigma$ be as above. Let $\mathcal{D}_{C}$ be the root diagram for $C_{\mathcal{X}_{a,b}}(D,\lambda P)$. Fix $i$, $1\leq i \leq r$, and let $\rho_1$, $\rho_2$ and $\rho_3$ be as above. Therefore,

1) if $\displaystyle \lambda \geq (i-1)(\rho_1 b)$, then the $i$-th row of $\mathcal{D}_{C}$ is not full, in the sense that not every boxes composing the $i$-th row are marked with $X$;

2) if $\displaystyle \lambda \geq (\rho_2 a+\rho_3 b) + (i-1)(\rho_1 b)$, then the row is empty, in the sense that none of the boxes composing the $i$-th row is marked with $X$.
\end{proposition}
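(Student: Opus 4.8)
The plan is to analyze the $i$-th row of the root diagram by constructing explicit elements of $\overline{C}$ whose leading terms control which boxes can be marked with $X$. Recall that a box in row $i$ is marked with $X$ precisely when the corresponding element of $\mathcal{R}_i$ is a root of $g_i^{(i)}(t)$, and that $\deg g_i^{(i)}(t) = |O_i| - m_i$ where $m_i$ is the number of empty boxes. So to show the row is ``not full'' it suffices to exhibit a nonzero element of $\overline{C}$ of the form $(0,\ldots,0,p(t),*,\ldots,*)$ with $\deg p(t) < |O_i| = \nu$, since then $g_i^{(i)}(t)$ (which divides the leading term of any such element by the staircase structure of the Gr\"obner basis from \cite{little2}, Proposition II.B.4) has degree $<\nu$ and at least one box is empty. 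To show the row is ``empty'' it suffices to produce such an element with $p(t)$ a nonzero \emph{constant}, forcing $g_i^{(i)}(t)$ to be constant and all $\nu$ boxes empty.

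First I would build the candidate functions. For part 1), consider the product $M_1(y)M_2(y)\cdots M_{i-1}(y)$: by hypothesis each $M_k(y)$ vanishes on $O_k$ and is a nonzero constant on every other orbit $O_j$ with $j \neq k$, so this product vanishes identically on $O_1,\ldots,O_{i-1}$ and is a nonzero constant on $O_i,\ldots,O_r$. By Lemma~\ref{lemma 1}, $\mathrm{div}_\infty\big(\prod_{k<i} M_k\big) = (i-1)(\rho_1 b)P$, hence this function lies in $\mathcal{L}(\lambda P)$ as soon as $\lambda \geq (i-1)(\rho_1 b)$, which is exactly the stated hypothesis. Passing this function through the correspondence of Lemma~\ref{lema submodulo}, the associated $r$-tuple has its first $i-1$ coordinates equal to $0$ and its $i$-th coordinate equal to a nonzero constant times $(1 + t + \cdots + t^{\nu-1})$ — in fact a nonzero constant vector on $O_i$, which as a polynomial in $t$ has degree $\nu - 1 < \nu = |O_i|$. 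This produces the desired element of $\overline{C}$ with $i$-th entry of degree $<\nu$, so by the staircase form of $\mathcal{G}$ the leading monomial $t^{|O_i|-m_i}\mathbf{e}_i$ of $\mathbf{g}^{(i)}$ satisfies $|O_i| - m_i \leq \nu - 1$, i.e.\ $m_i \geq 1$ and the row is not full.

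For part 2), I would multiply the previous function by a suitable $B_{i,j}(x,y)$ — say $B_{i,0}(x,y)$ — which vanishes at every point of $O_i$ except $P_{i,0}$; then the product $B_{i,0}(x,y)\prod_{k<i}M_k(y)$ vanishes on all of $O_1,\ldots,O_{i-1}$ and vanishes at every point of $O_i$ except one, so its associated $r$-tuple has first $i-1$ coordinates zero and $i$-th coordinate supported on a single power of $t$, i.e.\ of the form $c\, t^{j_0}$ for a nonzero $c \in \mathbb{F}_q$. Multiplying by an appropriate power of $t$ (which is allowed since $\overline{C}$ is a $\mathbb{F}_q[t]$-module and corresponds to applying $\sigma$, cycling through the points of the orbit) one may assume the $i$-th coordinate is the nonzero constant $c$. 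By Lemma~\ref{lemma 1} the pole divisor of this product is $(\rho_2 a + \rho_3 b)P + (i-1)(\rho_1 b)P$, so it lies in $\mathcal{L}(\lambda P)$ under the hypothesis $\lambda \geq (\rho_2 a + \rho_3 b) + (i-1)(\rho_1 b)$. The resulting element of $\overline{C}$ has $i$-th coordinate a nonzero constant, forcing $\deg g_i^{(i)}(t) = 0$, hence $m_i = |O_i| = \nu$ empty boxes: the $i$-th row is empty.

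The main obstacle — and the point requiring the most care — is the bookkeeping that links ``degree of the $i$-th coordinate polynomial'' to ``number of roots of $g_i^{(i)}(t)$'' to ``number of marked boxes.'' One must verify that the staircase/triangular form of the Gr\"obner basis guarantees that whenever $\overline{C}$ contains an element $(0,\ldots,0,p(t),*,\ldots,*)$ with $p(t)\neq 0$, the leading term $\mathrm{LT}(\mathbf{g}^{(i)}) = g_i^{(i)}(t)$ divides $p(t)$ — this is the step that converts the existence of a low-degree element into a bound on $\deg g_i^{(i)}$, and it uses both the POT ordering and the fact that the first $i-1$ coordinates vanish. One must also double-check the claim that multiplication by $t$ in $\overline{C}$ genuinely lets us normalize a single-term $i$-th coordinate to a constant, i.e.\ that the cyclic structure on $O_i$ is compatible with the module structure; this is essentially built into Lemma~\ref{lema submodulo} but deserves an explicit sentence. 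The rest is the routine pole-order accounting already packaged in Lemma~\ref{lemma 1}.
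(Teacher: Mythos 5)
Your proof is correct and follows essentially the same route as the paper: for part~1 you build $F_i=\prod_{k<i}M_k$ and observe the resulting $i$-th coordinate is a nonzero constant multiple of $1+t+\cdots+t^{\nu-1}$, and for part~2 you multiply by $B_{i,0}$ to force a constant $i$-th coordinate, with the same pole-order bookkeeping from Lemma~\ref{lemma 1}. The only cosmetic differences are that the paper closes part~1 by noting $h_i(1)\neq 0$ (pinpointing the box at the root $1$) rather than via a degree count, and your ``normalize by multiplying by $t$'' step in part~2 is superfluous since $B_{i,0}$ already places the surviving value at the $t^0$ position.
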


\begin{proof}

Let $\overline{C}\leq \mathbb{F}_{q}[t]^r$ be the submodule associated to $C_{\mathcal{X}_{a,b}}(D,\lambda P)$.

1) Suppose that $\displaystyle \lambda \geq (i-1)(\rho_1 b)$. By Lemma \ref{lemma 1}, the function
$$
F_{i}(x,y)= M_1(x,y)\cdots M_{i-1}(x,y)
$$
belongs to $L(\lambda P)$ and hence $ev(F_i)\in C_{\mathcal{X}_{a,b}}(D,\lambda P)$.
By computing $ev(F_{i})$, we observe that $\overline{C}$ contains an element of the form
$(0,\ldots , 0, h_{i}(t), \ldots , h_{r}(t))$ with $i-1$ zeroes and $h_{i}(t)=\sum_{j=0}^{|O_{i}|-1}F_i(P_{i,j})t^j.$
Since
\[
F_{i}(P_{i,j})=M_1(P_{i,j})\cdots M_{i-1}(P_{i,j})= \mbox{ constant } c \neq 0\, ,
\]
we have $h_{i}(t)=c.\sum_{j=0}^{|O_{i}|-1} t^j$
and thus $h(1) \neq 0$ as $|O_{i}|$ divides $q-1$.
Therefore the $i$-th row of $\mathcal{D}_{C}$ is not full, since $g_{i}^{(i)}(t)$ divides $h_{i}(t)$.

\medskip

2) Now, suppose $\displaystyle \lambda \geq (\rho_2+\rho_3 b) + (i-1)(\rho_1 b)$. So, by Lemma \ref{lemma 1}, $G_i(x,y)=B_{i,0}(x,y)F_i(x,y) \in L(\lambda P)$ and $G_{i}(Q)=0$ for $Q\in O_{1}\cup O_{2} \cup \ldots \cup O_{i-1}$.
Moreover, $G_{i}(Q)=0$ for all $Q \in O_{i} \setminus \{P_{i,0}\}$.
Then the element of $\overline{C}$ corresponding to $ev(F'_i)$ verifies
$h_{1}(t)=h_{2}(t)= \ldots = h_{i-1}(t)=0$ and $h_{i}(t)=G_{i}(P_{i,0})=c \neq 0$.
Thus, $\overline{C}$ contains the element $(0,\ldots ,0, c, h_{i+1}(t), \ldots , h_{r}(t))$
and follows that the $i$-th row of $\mathcal{D}_{C}$ is empty. $\square$
\end{proof}

Let $N$ be the number of rational points on $\mathcal{X}_{a,b}$, by Riemman-Roch Theorem,  follows that if $\lambda < N$, then the dimension of the one-point AG code $C_{\mathcal{X}_{a,b}}(D,\lambda P)$ is equal to the dimension of the Riemann-Roch space $L(\lambda P)$. In this case, we complete the informations about the root diagram $\mathcal{D}_C$.

\begin{theorem} \label{main theorem} Let $\mathcal{D}_C$ be the root diagram for $C_{\mathcal{X}_{a,b}}(D,\lambda P)$. If there is $i \in \{1, \ldots ,r \}$ such that
$$
(i-1)(\rho_1 b) \leq \lambda <  (\rho_2 a+\rho_3 b) + (i-1)(\rho_1 b),
$$
	
then the $i$-th row of $\mathcal{D}_C$ is neither full, nor empty, and the complement of the set of roots marked on row $i$ of the diagram is the set

$E_i = \{\alpha^{-(\beta+\gamma b)}\in\mathbb{F}_{q}^*\mid 0\leq\beta\leq b -1 \mbox{, } 0\leq \gamma\leq \rho_1 -1 \mbox{, }  (i-1)(\rho_1 b) +\beta a+ \gamma b\leq \lambda\}$.

\end{theorem}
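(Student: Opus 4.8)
The plan is to combine the two halves of Proposition~\ref{123eta2} with an explicit count of nonstandard monomials in order to pin down exactly which boxes on row $i$ are marked. The hypothesis $(i-1)(\rho_1 b)\le\lambda$ already gives, via part~1) of Proposition~\ref{123eta2}, that row $i$ is not full, while the hypothesis $\lambda<(\rho_2 a+\rho_3 b)+(i-1)(\rho_1 b)$ means the sufficient condition for row $i$ to be empty (part~2)) fails; so a priori row $i$ could still be empty, and the content of the theorem is that it is not, together with the precise description of $E_i$. The key observation is that row $i$ corresponds to $g_i^{(i)}(t)$, whose roots are the marked boxes, and the number of \emph{empty} boxes on row $i$ equals the number of nonstandard monomials $t^\ell\mathbf e_i$; summing over all rows and using Proposition~\ref{propdimensao} together with $\lambda<N$ (so $\dim C=\dim L(\lambda P)$) gives a global bookkeeping constraint. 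The strategy is to exhibit enough explicit elements of $\overline C$ to force certain boxes to be unmarked, and then to check that the total count of forced-unmarked boxes across all rows already accounts for the full dimension $\dim L(\lambda P)$, whence no further boxes can be unmarked and the description is exact.

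Concretely, first I would fix $i$ and, for each pair $(\beta,\gamma)$ with $0\le\beta\le b-1$, $0\le\gamma\le\rho_1-1$ and $(i-1)(\rho_1 b)+\beta a+\gamma b\le\lambda$, build a function in $L(\lambda P)$ of the form $x^\beta y^\gamma\cdot M_1\cdots M_{i-1}$ — using $\mathrm{div}_\infty(x)=aP$, $\mathrm{div}_\infty(y)=bP$ and Lemma~\ref{lemma 1} so that its pole order at $P$ is $\beta a+\gamma b+(i-1)(\rho_1 b)\le\lambda$. Evaluating and passing to $\overline C$, the $i$-th component is $h_i(t)=c\sum_{j=0}^{\nu-1}(\alpha^{?})^j t^j$-type expression; more precisely, since $P_{i,j}=(\alpha^j x_i,\alpha^{jt}y_i)$ and $M_1\cdots M_{i-1}$ is a nonzero constant on $O_i$, one gets $h_i(t)=c'\sum_{j=0}^{\nu-1}\alpha^{j(\beta+\gamma t\cdot(\text{something}))}t^j$; I would compute the exponent carefully and identify $h_i(t)$ with a constant times $(t - \alpha^{-(\beta+\gamma b)})^{-1}(t^\nu-1)$ up to normalization — i.e. $h_i$ vanishes at every $\nu$-th root of unity except $\alpha^{-(\beta+\gamma b)}$. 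Since $g_i^{(i)}(t)$ divides $h_i(t)$, the box labelled $\alpha^{-(\beta+\gamma b)}$ cannot be a root of $g_i^{(i)}$, so it is unmarked; this shows $E_i$ (as defined) is contained in the set of empty boxes on row $i$.

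Next I would show the reverse inclusion by a dimension count. For each $i$, $|E_i|$ equals the number of admissible pairs $(\beta,\gamma)$, and because $\gcd(a,b)=1$ the map $(\beta,\gamma)\mapsto \beta a+\gamma b$ on the box $0\le\beta\le b-1$, $0\le\gamma\le\rho_1-1$ is injective, so the admissible pairs are in bijection with a set of semigroup elements, which (together with the running index $i$ and the shift $(i-1)(\rho_1 b)$) I would match against the gap/nongap structure of $H(P)=\langle a,b\rangle$ and the Riemann–Roch count $\dim L(\lambda P)=\#\{m\in H(P): m\le\lambda\}$ (valid for $\lambda<N$, in particular for $\lambda$ large relative to $2g$). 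Summing $|E_i|$ over $i=1,\dots,r$ — handling also the rows that are already fully empty or fully non-full by Proposition~\ref{123eta2} and the contribution of the last $s$ orbits — should reproduce exactly $\dim C$, which by Proposition~\ref{propdimensao} is the total number of empty boxes. Since we have already produced $|E_i|$ empty boxes on row $i$ for every $i$ and these total the right number, equality must hold row by row, giving that the empty boxes on row $i$ are \emph{exactly} $E_i$; in particular $E_i\ne\emptyset$ (row not empty) follows from the left inequality $(i-1)(\rho_1 b)\le\lambda$ forcing the pair $(\beta,\gamma)=(0,0)$ to be admissible, and $E_i$ is a proper subset of the row (row not full) by part~1).

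The main obstacle I expect is the exponent bookkeeping in the second paragraph: getting the correct identification of which root of unity survives in $h_i(t)$ requires tracking the action $y\mapsto\alpha^t y$ through the monomial $x^\beta y^\gamma$ and relating the resulting character $\alpha^{j(\beta+\gamma t)}$ to the claimed index $\beta+\gamma b$ — this presumably uses the relation $f(y)=g(x)$ together with $\mathrm{div}_\infty(y)=bP$, $\mathrm{div}_\infty(x)=aP$ to tie $t$ to $b$ modulo $\nu=\mathrm{ord}(\alpha)$ (since $\sigma$ must fix $P$ and preserve pole orders, forcing a compatibility like $a\equiv t b$ or similar mod $\nu$), and one must be careful that $\alpha^{-(\beta+\gamma b)}$ for distinct admissible $(\beta,\gamma)$ are genuinely distinct elements of $\mathbb F_q^*$ — which again is where $\gcd(a,b)=1$ and the range constraints $\beta\le b-1$, $\gamma\le\rho_1-1$ enter, possibly needing $\rho_1 b\le\nu$ or an analogous bound. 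The dimension-count step is then essentially forced once the explicit elements are in hand, but verifying that nothing is double-counted between the first $r$ rows and the remaining $s$ rows will require invoking the "similar" analysis alluded to for $O_{r+1},\dots,O_{r+s}$ in the particular cases where it applies.
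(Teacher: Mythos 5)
Your proposal follows essentially the same route as the paper's own proof: construct $F_i(y)\,x^\beta y^\gamma$ with $F_i = M_1\cdots M_{i-1}$, evaluate to get an element of $\overline{C}$ whose $i$-th component is a geometric series with a single surviving root $\alpha^{-(\beta+t\gamma)}$, conclude $E_i$ is contained in the set of unmarked boxes, and then close the gap by comparing $\sum\#E_i$ with $\dim L(\lambda P)=\#\{m\in H(P): m\le\lambda\}$ via Proposition~\ref{propdimensao}. You are right to flag the exponent bookkeeping: the evaluation naturally produces $\alpha^{-(\beta+t\gamma)}$, and matching it with the stated $\alpha^{-(\beta+\gamma b)}$ needs $t\equiv b\pmod\nu$ (as happens in both of the paper's examples after normalizing the generator $\alpha$), a hypothesis the paper never states but implicitly uses; likewise the dimension count quietly assumes $\nu=b\rho_1$, injectivity of $(\beta,\gamma)\mapsto\alpha^{-(\beta+\gamma b)}$ on the relevant box, and that rows $r+1,\dots,r+s$ contribute nothing extra. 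One small slip in your last paragraph: $E_i$ is the set of \emph{unmarked} boxes, so $E_i\neq\emptyset$ corresponds to ``row not full'' (which part~1) already gives), while ``row not empty'' is the assertion that $E_i$ is a \emph{proper} subset of the $\nu$ boxes; you have the two correspondences swapped, and the latter is the one that actually needs the right-hand inequality $\lambda<(\rho_2 a+\rho_3 b)+(i-1)\rho_1 b$ together with $\nu=b\rho_1$ and injectivity.
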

\begin{proof}

Let $\overline{C}\leq \mathbb{F}_{q}[t]^r$ be the submodule associated to $C_{\mathcal{X}_{a,b}}(D,\lambda P)$. Let $D_i \subset \mathbb{F}_{q}^{\ast}$ be the set of non marked boxes in row $i$, where $1 \leq i \leq r$. We will show that $D_{i} = E_{i}$. Let $F_i(y)$ be as in the previous proposition and consider $f_i(x,y) = F_i(y)x^{\beta}y^{\gamma}$. By Lemma \ref{lemma 1} and the conditions over $\beta$ and $\gamma$ given in the definition of $E_i$, we have that $f_i(x,y) \in L(\lambda P)$. So, associated to $f_i(x,y)$ we get an element $h=(h_1(t), \ldots , h_{r}(t)) \in \overline{C}$. Since $F_i(Q)=0$ for all $Q \in O_1 \cup \ldots \cup O_{i-1}$, follows that $h_k(t)=0$, for $k=1,\ldots,i-1$. Let $P_{i,j}=\sigma(P_{i,0})=(\alpha^j x_i,\alpha^{tj}y_i) \in O_i$. Thus, $f_i(P_{i,j})= F_i(P_{i,j})\alpha^{j\beta} x_{i}^{\beta}\alpha^{tj\gamma}y_{i}^{\gamma} = F_i(P_{i,j})x_{i}^{\beta} y_{i}^{\gamma} \alpha^{j(\beta + t \gamma)}$. Now, $F_i(P_{i,j})$, $x_{i}^{\beta}$ and $y_{i}^{\gamma}$ are all non-zero constants and independents of $j$. Taking $b_i=F_i(P_{i,j})x_{i}^{\beta} y_{i}^{\gamma} \neq 0$, we have

$\displaystyle h_i(t) = \sum_{j=0}^{|O_i|-1}f_i(P_{i,j})t^j = [|O_i|-1]b_i \sum_{j=0}^{|O_i|-1} (\alpha^{\beta + t \gamma} t)^j$ whose roots are all distinct from $\alpha^{-(\beta + t \gamma)}$. Consequently, $\alpha^{-(\beta + t \gamma)}$ is not a root of $g_{i}^{(i)}(t)$ and hence $E_{i} \subseteq D_{i}$.

By Proposition \ref{propdimensao}, $dim (C_{\mathcal{X}_{a,b}}(D,\lambda P)) = \sum \sharp D_i$. Since $H(P) = \langle a,b \rangle$ and $\lambda < N$, we have that $dim(C_{\mathcal{X}_{a,b}}(D,\lambda P)) = \sharp \{ (\beta , \gamma) \in \mathbb{N}^2 \mbox{ ; } 0 \leq \beta \leq  b-1 \mbox{ and } \beta a + \gamma b \leq \lambda \}$.

Let $\widehat{E}_i=\{(\beta , \gamma) \in \mathbb{N}^2 \mid 0\leq\beta\leq b-1,  0\leq \gamma\leq \rho_1 -1, \,\, (i-1)(\rho_1 b)+\beta a+ \gamma b\leq \lambda\}$. Thus, $\sharp \{ (\beta , \gamma) \in \mathbb{N}^2 \mbox{ ; } 0 \leq \beta \leq  b-1 \mbox{ and } \beta a + \gamma b \leq \lambda \} = \sum \sharp \widehat{E}_i$ and, since $\sum \sharp  \widehat{E}_i = \sharp \sum E_i$, follows that $\sum \sharp D_i = \sum \sharp E_i$. Therefore, $E_i = D_i$. $\square$
\end{proof}

Let $F_i(y)$ be as above. Then, we have that $F_i(Q)=c_i \in \mathbb{F}_{q}^{\ast}$, for all $Q \in O_i$. With the conditions of the above theorem, fix an index $i$, $1 \leq i \leq r$, where the row $i$ of $\mathcal{D}_C$ is neither full, nor empty. Let $\alpha^{k_{1}}, \alpha^{k_{2}}, \ldots ,\alpha^{k_{\ell}}$ be the roots marked on the row $i$ and let $p(t)=\prod_{j=1}^{\ell}(t-\alpha^{k_{j}})$ be the unique monic polynomial of degree $\ell$ with these roots. Note that, including zeroes for powers of $t$ higher than the number of roots, we can write $p(t)= \sum_{j=0}^{|O_i|-1} a_j t^j$, where $a_j=0$ for $j> \ell$. Consider the function
$$
\displaystyle f_{i}(x,y)=  \frac{F_i(y)}{c_i} \left(
\sum_{j=0}^{|O_{i}|-1} a_{j}\frac{B_{i,j}(x,y)}{B_{i,j}(P_{i,j})}
\right)
$$

Then, by definition of $F_i(y)$ and $B_{i,j}(x,y)$, it is clear that $f_{i}(x,y)\in L(\lambda P)$ and its associated module element $\mathbf{h} \in \overline{C}$ has $i-1$ leading zero components and $i$-th component $h_{i}(t)$ equal to $p(t)$.

So, using the same procedures used in \cite{little} and \cite{FMTT}: 

$\circ$ \texttt{RootDiagram[$i$]}:  returns a list of the roots
corresponding to the marked boxes in line $i$ of $\mathcal{D}_C$;

$\circ$ \texttt{Boxes[$i$]}: the number of boxes in row $i$ of $\mathcal{D}_C$, that is $\mbox{\tt Boxes[$i$]}=|O_i|$;

$\circ$ \texttt{Evaluate[$i, point$]}: a procedure which takes as input the coefficients $\{ a_{k}\}$ of the unique monic polynomial
over ${\mathbb{F}}_{q}$ having the marked elements on a row number $i$ as roots and a point $P_{i,j}$ on $O_{i}$, and
evaluates the function $f_{i}(x,y)$ as above at a $point$ $P_{i,j}$;

we get an analogous algorithm that computes a non-reduced POT Gr\"{o}bner basis for the submodule $\overline{C}$ associated to $C_{\mathcal{X}_{a,b}}(D,\lambda P)$ and thus to apply the systematic encoding given in Subsection \ref{AG codes and modules} to the AG codes $C=C_{\mathcal{X}_{a,b}}(D,\lambda P)$.

\begin{algor}\label{etalgoritmo}

$\,$

{\rm
\textbf{Input:} the root diagram $\mathcal{D}_C$, the $N$ rational points $P_{i,j}$ of $Supp(D)=O_1 \cup \ldots \cup O_r \cup O_{r+1} \cup O_{r+s}$. \newline
\textbf{Output:} a non-reduced Gr\"obner basis $\mathcal{G}=\{ \mathbf{g}^{(1)},\mathbf{g}^{(2)},\ldots,\mathbf{g}^{(r+s)} \}$ of $\overline{C}$.

\begin{tabbing}
1. $\mathcal{G}:=\{\}$ \\
2. {\bf for} $i$ from 1 to $r+s$ {\bf do} \\
3. \hspace*{3mm} \= {\bf if} {\tt |RootDiagram[$i$]|}$<${\tt Boxes[$i$]} {\bf then} \\
4. \> \hspace*{3mm} \= {\bf for} $k$ from $1$ to $r+s$ {\bf do} \\
5. \> \> \hspace*{3mm} \= $g_{k}^{(i)}:=0$ \\
6. \> \> \> {\bf if} $k\geq i$ {\bf then} \\
7. \> \> \> \hspace*{3mm} \= {\bf for} $j$ from $0$ to $\mbox{\tt Boxes[$k$]}-1$ {\bf do} \\
8. \> \> \> \> \hspace*{3mm} \= $g_{k}^{(i)}:=g_{k}^{(i)}+\mbox{\tt Evaluate[$i$,$P_{k,j}$]}\,t^j\,\mathbf{e}_{k}$ \\
9. \> \> \> \> {\bf end for} \\
10. \> \> \> {\bf end if} \\
11. \> \> {\bf end for} \\
12. \> {\bf else} \\
13. \> \> $\mathbf{g}^{(i)}:=(t^{ \mbox{\small \tt Boxes[$i$]}}-1)\,\mathbf{e}_{i}$ \\
14. \> {\bf end if} \\
15. \> $\mathcal{G}:=\mathcal{G}\cup \{\mathbf{g}^{(i)}\}$ \\
16. {\bf end for} \\
17. {\bf return} $\mathcal{G}$
\end{tabbing}
}

\end{algor}

We note that this algorithm has the same computational complexity as the original one developed by Little, Saints and Heegard
in \cite{little}. It is much lower than the complexity of general Gr\"{o}bner basis algorithms, since we only make use of
interpolation problems and evaluation of functions. In particular we do not use divisions nor reductions that would increase the
complexity, as in the case of Buchberger's algorithm.


\section{Examples}

\subsection{The curve $\mathcal{X}_{q^{2r}}$} Let $\mathcal{X}_{q^{2r}}$ be the curve defined over $\mathbb{F}_{q^{2r}}$ by the affine equation
   $$
y^q + y = x^{q^r + 1} ,
  $$
where $q$ is a prime power and $r$ an odd integer. Note that when $r=1$ the curve is just the Hermitian curve.  The curve $\mathcal{X}_{q^{2r}}$ has genus $g=q^{r}(q-1)/2$, one single singular point $P_{\infty}=(0:1:0)$ at infinity and others $q^{2r+1}$ rational points. Thus, this curve is a maximal curve over $\mathbb{F}_{q^{2r}}$ because its number of rational points equals the upper Hasse-Weil bound, namely equals $q^ {2r} + 1 + 2gq^r$. Furthermore, $H(P_{\infty}) = \langle q, q^{r}+1 \rangle$, see \cite{st}, and
\begin{equation} \label{automorfismo sigma}
\begin{array}{cccc}
	\sigma: & x & \mapsto & \alpha x\\
	& y& \mapsto & \alpha^{q^r +1}y
\end{array}
\end{equation}

with $\alpha \in \mathbb{F}_{q^{2r}}^{\ast}$ such that $\alpha^{(q^r+1)(q-1)}=1$, is an automorphism of $\mathcal{X}_{q^{2r}}$, see \cite{kondo}. Note that $\sigma$ has order $(q^r+1)(q-1)$. So, the order of $\sigma$ divides $q^{2r}-1$.

Note that under the action of the automorphism $\sigma$ above the $q^{2r+1}$ rational points on $\mathcal{X}_{q^{2r}}$ are disposed in $q(q^{r-1}+\cdots+q)+2$ orbits, where $q(q^{r-1}+\cdots+q)$ of them has length $(q^r+1)(q-1)$ and the remaining two orbits, one has length $q-1$ and the other has length $1$. In fact, for the definition of the automorphism $\sigma$, it is clear that:

$\cdot$ $\sigma(0,0)=(0,0)$, and so we have a one orbit with a single point;

$\cdot$ all the $q-1$ rational points $(0,b)$, with $b \neq 0$, form an orbit with length $q-1$, since $\sigma(0,b)=(0,\alpha^{q^r +1}b)$ and $\alpha \in \mathbb{F}_{q^{2r}}^{\ast}$ is such that $\alpha^{(q^r+1)(q-1)}=1$;

$\cdot$ the others $q^{2r+1}-q = q(q^r+1)(q^r-1)$ rational points $(x,y)\in \mathcal{X}_{q^{2r}}$, with $x\neq 0$ and $y \neq 0$, are arranged in $q(q^{r-1}+\cdots+q)$ orbits of length $(q^r+1)(q-1)$.

\medskip

Let $r=q(q^{r-1}+\cdots+q)$ and $\alpha$ be as in (\ref{automorfismo sigma}). Let $\mathbb{F}_{q^{2r}}^{\ast} = \langle a \rangle$ and $t \in \{ 0,1, \ldots , q^{2r}-2 \}$ be such that $\alpha = a^t$. So, given $P_{i,0}=(a^{t_i},a^{l_i}) \in O_i$, the others points $P_{i,j}$ on $O_i$ are $P_{i,j} =\sigma^j(P_{i,0})=(a^{t_i+jk},a^{l_i+jk(q^r+1)})$, with $j \in \{ 1,\ldots, (q^r+1)(q-1)-1 \}$. Then, for $i=1,\ldots,r$ and $j=0,\ldots,(q^r+1)(q-1)-1$, we get
\begin{equation} \label{eq1}
M_i(y):=\prod_{j=0}^{q-2}(y-a^{l_i+jk(q^r+1)})=y^{q-1}-a^{l_i(q-1)},
\end{equation}

and

\begin{equation}
B_{i,j}(x,y);= \prod_{s=1}^{q-2}(y-a^{l_i+k(q^r+1)(j+s)})\prod_{s=1}^{(q^r+1)-1}(x-a^{t_i+k(j+s)}).
\end{equation}

Since $div_{\infty}(x) = q P_{\infty}$ and $div_{\infty}(y) = (q^r + 1)P_{\infty}$, we have that

\medskip

$\bullet$ $div_{\infty}(M_i(y)) = (q-1)(q^r+1) P_{\infty}$, that is, $M_{i}(y) \in L((q-1)(q^r+1) P_{\infty})$, for all $i=1,\ldots,r$;

\medskip

$\bullet$ $div_{\infty}(B_{i,j}(x,y)) = ((q-2)(q^r+1)+q((q^r+1)-1))P_{\infty}$, that is, $B_{i,j} \in L(q.q^r+(q-2)(q^r+1))P_{\infty})$, for all $1 \leq i \leq r)$ e $0 \leq j \leq (q^r+1)(q-1)-1$.

\medskip

With the notations on the previous section we have that:

$\bullet$ $a=q$ and $b=q^r + 1$;

$\bullet$ $P=P_{\infty}$;

$\bullet$ $div_{\infty}(x) = q P_{\infty}$ and $div_{\infty}(y) = (q^r + 1)P_{\infty}$;

$\bullet$ $H(P_{\infty}) = \langle q, q^{r}+1 \rangle$;

$\bullet$ $\rho_1 = q-1$, $\rho_2 = q^r$ and $\rho_3 = q-2$.

\medskip

Thus, using the Proposition \ref{123eta2} and the Theorem \ref{main theorem}, we can get the root diagram for one-point codes $C_{\mathcal{X}_{q^{2r}}}(D, \lambda P_{\infty})$ and then the Gr\"{o}bner basis for the module $\overline{C}$ associated to $C_{\mathcal{X}_{q^{2r}}}(D, \lambda P_{\infty})$ by Algorithm \ref{etalgoritmo}.

\subsection{A Quotient of the Hermitian curve}
Let $\mathcal{X}_{m}$ de the curve defined over $\mathbb{F}_{q^2}$ by the affine equation
   $$
y^q + y = x^m ,
  $$
  where $q$ is a prime power and $m>2$ is a divisor of $q+1$. This curve has genus $g=(q-1)(m-1)/2$, a single point at infinity, denoted by $P_{\infty}$, and others $q(1+m(q-1))$ rational points. In \cite{garcia}, it is shown that $\mathcal{X}_{m}$ ia a maximal curve and in \cite{matthews}, G. Matthews studied Weierstrass semigroup and algebraic codes over this codes. As a result present by Matthews we have that $H(P_{\infty})=\langle m, q\rangle$.

  Let $\mathbb{F}_{q^2}^{\ast} = \langle \alpha \rangle$ and $k$ such that $mk=q+1$. Then,
  
\begin{equation} \label{automorfismo tau}
\begin{array}{cccc}
\tau: & x & \rightarrow & \alpha^k x\\
& y& \rightarrow & \alpha^{q+1}y
\end{array}
\end{equation}

is an automorphism of $\mathcal{X}_{m}$ of order $m(q-1)$, which divides $q^2-1$.

It is not hard to see that under the action of the automorphism $\tau$ above the $q(1+m(q-1))$ rational points on $\mathcal{X}_{m}$ are disposed in $q+2$ orbits, where $q$ of them has length $m(q-1)$ and the remaining two orbits, one has length $q-1$ and the other has length $1$.

Taking $r=q$ and the first $r$ orbits given by points on $\mathcal{X}_m$ of the form $P=(a,b)$ with $a,b\neq0$. So, for each $i=1,\ldots,r$, given $P_{i,0}=(\alpha^{\ell_i},\alpha^{t_i}) \in O_i$, the others points $P_{i,j}$ on $O_i$ are $P_{i,j} =\sigma^j(P_{i,0})=(\alpha^{\ell_i+jk},\alpha^{t_i+j(q+1)})$, with $j \in \{ 1,\ldots, m(q-1)-1 \}$. That is,
$$
O_i= \{ P_{i,j}=(\alpha^{\ell_i+jk},\alpha^{t_i+j(q+1)}) \mbox{ ; } j=0,\ldots,m(q-1)-1 \}.
$$

Then, for $i=1,\ldots,r$ and $j = 0,1,\ldots, m(q-1)-1$, we get
$$
\displaystyle M_{i}(y) = \prod_{j=0}^{q-2}(y- \alpha^{t_i+j(q+1)})
$$
and
$$
\displaystyle B_{i,j}(x,y) = \prod_{s=0, s\neq j}^{q-2}(y- \alpha^{t_i+s(q+1)}) \prod_{s=0, s\neq j}^{m(q-1)-1} (x- \alpha^{\ell_i+sk}).
$$

So, since $div_{\infty}(x) = q P_{\infty}$ and $div_{\infty}(y) = mP_{\infty}$, follows that

\medskip

$\bullet$ $div_{\infty}(M_i(y)) = (q-1)m P_{\infty}$, that is, $M_{i}(y) \in L((q-1)m P_{\infty})$, for all $i=1,\ldots,r$;

\medskip

$\bullet$ $div_{\infty}(B_{i,j}(x,y)) = ((q-2)m+(m-1)q)P_{\infty}$, that is, $B_{i,j} \in L((m-1)q+(q-2)m)P_{\infty})$, for all $1 \leq i \leq r)$ e $0 \leq j \leq m(q-1)-1$.

\medskip

With the notations on the previous section we have that:

$\bullet$ $a=q$ and $b=m$;

$\bullet$ $P=P_{\infty}$;

$\bullet$ $(x)_{\infty} = q P_{\infty}$ and $(y)_{\infty} = mP_{\infty}$;

$\bullet$ $H(P_{\infty}) = \langle q, m \rangle$;

$\bullet$ $\rho_1 = q-1$, $\rho_2 = q-2$ and $\rho_3 = m-1$.

\medskip

Therefore, we can get the root diagram for one-point codes $C_{\mathcal{X}_{m}}(D, \lambda P_{\infty})$ and then the Gr\"{o}bner basis for the module $\overline{C}$ associated to $C_{\mathcal{X}_{m}}(D, \lambda P_{\infty})$.


\begin{thebibliography}{99}


\bibitem{adams}
\newblock W. Adams and P. Loustaunau,
\newblock \emph{An Introduction to Gr\"{o}bner bases},
Providence, RI: Amer. Math. Soc., 1994.

\bibitem{quoos}
\newblock A. S. Castellanos; A. M. Masuda; L. Quoos,
\newblock \emph{One- and Two-Point Codes Over Kummer Extensions}, IEEE Transactions on Information Theory, \textbf{62} (9) (2013), 4867-4872.

\bibitem{oshea}
\newblock D. Cox, J. Little and D. O'Shea,
\newblock \emph{Ideals,Varieties and Algorithms}, Springer, New York, 1992.

\bibitem{FMTT}
\newblock J.I. Farr\'{a}n, C. Munuera, G. Tizziotti and F. Torres,
\newblock \emph{Gr\"{o}bner basis for norm-trace codes}, Journal of Symbolic Computation, \textbf{48} (2013), 54-63.



\bibitem{garcia}
\newblock A. Garcia and P. Viana,
\newblock \emph{Weierstrass points on certain non-classical curves}, Arch. Math. \textbf{46} (1986), 315-322.


\bibitem{goppa1}
\newblock V. D. Goppa,
\newblock \emph{Codes on Algebraic curves}, Dokl. Akad. NAUK, SSSR, \textbf{259} (1981), 1289 - 1290.

\bibitem{goppa2}
\newblock V. D. Goppa,
\newblock \emph{Algebraic-geometric codes}, Izv. Akad. NAUK, SSSR, \textbf{46} (1982), 75 - 91.


\bibitem{vanlint}
\newblock T. H{\o}holdt, J. van Lint and R. Pellikaan,
\newblock \emph{Algebraic geometry codes}, V.S. Pless, W.C. Huffman (Eds.), Handbook of Coding Theory, v. 1, Elsevier, Amsterdam, 1998.

\bibitem{kondo}
     \newblock S. Kondo, T. Katagiri and T. Ogihara,
     \newblock \emph{Automorphism groups of one-point codes from the curves $y^q + y =
x^{q^r+1}$},
     \newblock IEEE Trans. Inform. Theory, \textbf{47} (2001), 2573-2579.

\bibitem{little2}
\newblock J. Little, C. Heegard and K. Saints,
\newblock \emph{Systematic encoding via Gr\"{o}bner bases for class of algebraic geometric Goppa codes},
IEEE Trans. Infor. Theory, \textbf{41}(6) (1995), 1752-1761.

\bibitem{little}
\newblock J. Little, C. Heegard and K. Saints,
\newblock \emph{On the structure of Hermitian codes}, J. Pure Appl. Algebra, \textbf{121} (1997), 293-314.


\bibitem{matthews}
\newblock G. L. Matthews,
\newblock \emph{Weierstrass Semigroups and codes from a quotient of the Hermitian curve},  Designs, Codes and Cryptography, \textbf{37} (2005), 473-492.


\bibitem{st}
\newblock A. Sep\'ulveda, G. Tizziotti,
\newblock \emph{Weierstrass semigroup and codes over the curve $y^q + y = x^{q^r+1}$}, Advances in Mathematics of Communications, \textbf{8} (2014), 67-72.

\bibitem{stichtenoth2}
\newblock H. Stichtenoth,
\newblock \emph{Algebraic Function Fields and Codes}, Springer-Verlag, Berlin, 1993.

\bibitem{stichtenoth} H. Stichthenoth, {\ On Automorphism of Geometric Goppa Codes}, Journal of Algebra, {\bf 130}, 1990, pp. 113 -- 121.

\bibitem{wesemeyer} S. Wesemeyer, {\ On the automorphism group of various Goppa codes},
IEEE Trans. Inform. Theory, vol. 44, 1998, pp. 630 -- 643.



\end{thebibliography}
\end{document}